\newtheorem{thm}{Theorem}[section]
\newtheorem{prop}[thm]{Proposition}
\newtheorem{lemma}[thm]{Lemma}
\newtheorem{fig}[thm]{Figure}
\theoremstyle{definition}
\newtheorem{definition}{Definition}[section]
\title{\bf{On the Long-Repetition-Free 2-Colorability of Trees}}
\author{%
	\makebox[.5\linewidth]{Joseph Antonides}\\Department of Mathematics \\ The Ohio State University \\ Columbus, OH, USA 43210 \\\href{mailto:antonides.4@osu.edu}{antonides.4@osu.edu} \\
	\and \makebox[.5\linewidth]{Claire Kiers}\\Department of Mathematics \\ The University of North Carolina Chapel Hill \\ Chapel Hill, NC, USA 27599 \\\href{mailto:cekiers@live.unc.edu}{cekiers@live.unc.edu} \\
	%\and \makebox[.5\linewidth]{Annika Miller}\\Department of Mathematical Sciences \\ Susquehanna University \\ Selinsgrove, PA, USA 17870 \\ \href{mailto:millerak@susqu.edu}{millerak@susqu.edu}\\
	\and \makebox[.5\linewidth]{Nicole Yamzon\footnote{Corresponding author}}\\Department of Mathematics \\ San Francisco State University \\ San Francisco, CA, USA 94132 \\ \href{mailto:nyamzon@mail.sfsu.edu}{nyamzon@mail.sfsu.edu}\\
}
\date{}
\begin{document}

\maketitle
\vspace{115mm}

\begin{abstract}
A word $\bar{w} = \bar{u}\bar{u}$ is a {\em long square} if $\bar{u}$ is of length at least 3; a word $\bar{w}$ is {\em long-square-free} if $\bar{w}$ contains no sub-word that is a long square. We can use words to generate graph colorings; a graph coloring is called {\em long-repetition-free} if the word formed by the coloring of each path in the graph is long-square-free. Every rooted tree of radius less than or equal to seven is long-repetition-free two-colorable. There exists a class of trees which are not long-repetition-free two-colorable.
\end{abstract}

\smallskip
{\bf Keywords:} graph coloring; long-square-free; long-repetition-free

\section*{Introduction}
The aim of this paper is to introduce the concepts of ``long square'' and ``long repetition,'' and to prove two main results regarding the long-repetition-free 2-colorability of trees. Most terminology relevant to this paper is provided, but the reader is encouraged to reference M. Lothaire \cite{Lothaire} and Bondy and Murty \cite{BondyMurty} for more information regarding combinatorics on words and graph theory, respectively. 

\indent The investigation of words that avoid specific letter patters (such as, for example, squares, cubes, or palindromes) has been a topic of interest in the mathematical sciences and the biological sciences since the early 20th century, beginning largely with the papers of Axel Thue (cf. \cite{Thue1}, \cite{Thue2}). Thue proved that it is impossible to define an infinitely-long, square-free word over a binary alphabet; however, Thue proved that it {\em is} possible to define an infinitely-long, square-free word over a ternary alphabet, and Thue gave a definition for such a word (which today is called the Thue-Morse word). One can use such a word to color the vertices of a graph and, in so doing, create what we call repetition-free 3-colorings. 

\indent We say a word $\bar{w}$ is a ``long square'' if $\bar{w} = \bar{u}\bar{u}$ for some  word $\bar{u}$ where $|\bar{u}| \geq 3$. Much is known regarding words that avoid squares generally (and graph colorings that avoid repetitions generally) - see Grytczuk \cite{Grytczuk} for a survey of such results; however, little is known about words that allow for squares of lengths 2 and 4. 

\indent In this article, we use two results regarding palindromes (in particular, ``long palindromes,'' or palindromes of length at least 4) to prove two main results regarding the long-repetition-free 2-colorability of trees. The first main result asserts that every rooted tree of radius less than or equal to seven is long-repetition-free 2-colorable. The second main results answers the question of whether or not every tree is long-repetition-free 2-colorable. We construct a tree called a Tyler tree of height 8 and prove that it is not long-repetition-free 2-colorable.

\section{Terminology and Notation}
We begin by providing several useful definitions and detailing the notation that is used throughout this paper. 

\begin{definition}
	An {\em alphabet} is a finite set of elements called {\em letters}, and we typically denote alphabets by a single capital letter (for example, $A$). The set of all words over alphabet $A$ is denoted $A^*$. We typically denote words of arbitrary length by a single lower-case letter with an overhead bar (for example, $\bar{w}$), and we typically denote letters (words of length 1) by a single lower-case letter with an overhead dot (for example, $\dot{a}$). We denote by $\epsilon$ the word of length $0$, called the {\em empty word}. We denote the concatenation of words $\bar{w}$ and $\bar{v}$ by $\bar{w}\bar{v}$ or, if we wish to stress the significance of concatenation, by $\bar{w} \cdot \bar{v}$. If a word $\bar{u}$ contains sub-word $\bar{w}$, we say $\bar{w} \leq \bar{u}$. 
\end{definition}

\begin{definition}
	For word $\bar{w} = \dot{a}_1 \dots \dot{a}_n$ over alphabet $A$, we define the {\em reversal} of $\bar{w}$ to be $\overleftarrow{w} = \dot{a}_n \dots \dot{a}_1$. A {\em palindrome} is a word $\bar{w}$ such that $\bar{w} = \overleftarrow{w}$. We define a {\em long palindrome} to be a palindrome of length at least 4. Word $\bar{w}$ is a square if there exists some non-$\epsilon$ word $\bar{u}$ such that $\bar{w} = \bar{u}\bar{u}$. Word $\bar{w}$ is {\em square-free} if $\bar{w}$ contains no sub-word that is a square. Similarly, word $\bar{w} = \bar{u}\bar{u}$ is a {\em long square} if $|\bar{u}| \geq 3$. Word $\bar{w}$ is {\em long-square-free} if $\bar{w}$ contains no sub-word that is a long square. 
\end{definition}

\begin{definition}
	Let $G = (V,E)$ be a simple, connected graph, and let $A = \{1, \dots, k\} \subset \mathbb{N}$. Graph $G$ is said to be {\em long-repetition-free $k$-colorable} if there exists a coloring $\chi : V \rightarrow A$ such that, for every path $P = u_1 e_1 \dots e_{n-1} u_n \subseteq G$, the word $\chi(u_1)\dots \chi(u_n)$ is long-square-free. 
\end{definition}

\begin{definition}
	Let $G=(V,E)$ be a connected graph. The {\em distance} between vertices $u,v \in V$ is defined to be the number of edges contained in the shortest path from $u$ to $v$. The {\em eccentricity} of vertex $v$ is the maximum distance between $v$ and any other vertex in $G$. The {\em radius} of graph $G$ is the minimum eccentricity of its vertices, and the {\em center} of graph $G$ is the sub-graph induced by the vertices of minimum eccentricity. 
\end{definition}

\begin{definition}
	A tree $T = (V,E)$ is called a {\em rooted tree} if one vertex has been distinguished to be called the {\em root} of $T$. The edges of a rooted tree are described with an implicit orientation away from the root vertex. In a rooted tree, the {\em parent} of a vertex $v$ is the vertex adjacent to $v$ on the path to the root vertex. A {\em child} (plural, {\em children}) of vertex $v$ is a vertex of which $v$ is the parent. A {\em descendant} of a vertex $v$ is any vertex which is either a child of $v$ or is, recursively, the descendant of any of the children of $v$. A {\em sibling} to a vertex $v$ is any other vertex in $V$ which has the same parent as $v$. The {\em $k$\textsuperscript{th} generation} of root $r$ of tree $T$, denoted $G_k(T)$, is the set of descendant vertices of distance $k$ from $r$. The {\em height} of $T$ is the number of generations in $T$ from $r$, with the convention that $G_0(T) = \{r\}$. 
\end{definition}

In particular, it is of note that if $T = (V,E)$ is a rooted tree, then for root $r \in T$ and vertex $v \in V$ there exists a unique paths from $r$ to $v$ and from $v$ to $r$. As a matter of notation, it is perhaps noteworthy that while the $k\textsuperscript{th}$ generation from $r$ is of course dependent on our choice of $r$, we simply notate this $G_k(T)$ since, for this paper, the particular vertex distinguished as the root is not particularly relevant and furthermore does not change once the root has been chosen. 

\begin{definition}
	The {\em Tyler tree of height $n$}, denoted $\mathbb{T}_n = (\mathbb{V}_n,\mathbb{E}_n)$, is the rooted tree with root $r$ in which each vertex in the $j$\textsuperscript{th} generation from $r$ has $2^{n-j}+1$ children, for every $0 \leq j \leq n-1$. Computationally, $|\mathbb{V}_n| = 1 + \displaystyle\sum_{j=0}^{n-1}\displaystyle\prod_{k=n-j}^{n} (2^{k}+1)$. A {\em binary tree} of height $n$ has at most $2^{n+1} - 1$ vertices. The reader can verify that the Tyler tree of height $n$ contains $2^n + 1$ Tyler trees of height $n-1$. 
\end{definition}

\section{A Class of Trees that is Long-Repetition-Free 2-Colorable}

The goal of the present section is to prove the first of our two main results: Proposition \ref{SmallTrees}, which asserts that every rooted tree of radius less than or equal to $7$ can be long-repetition-free 2-colored. 

In order to prove Proposition \ref{SmallTrees}, we rely on Lemma \ref{abxba} which we now state and prove. 

\begin{lemma}\label{abxba}
	Let $A = \{\dot{a},\dot{b}\}$, and let $\bar{w} \in A^*$. If $\bar{w} = \dot{a}\dot{b}\bar{x}\dot{b}\dot{a}$ for some $\bar{x} \in A^*$, then $\bar{w}$ contains a long palindrome. 
\end{lemma}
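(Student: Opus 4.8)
The plan is to study the maximal-run (run-length) decomposition of $\bar{w}$ and then split into two cases according to the lengths of those runs. Write $\bar{w} = \dot{a}\dot{b}\bar{x}\dot{b}\dot{a}$, so that $|\bar{w}| = |\bar{x}| + 4 \geq 4$, and group $\bar{w}$ into maximal blocks of equal consecutive letters. Because $|A| = 2$, these blocks alternate strictly between $\dot{a}$ and $\dot{b}$. The crucial preliminary observation concerns the two end blocks: since $\bar{w}$ begins with $\dot{a}\dot{b}$, its first block is a single $\dot{a}$, and since $\bar{w}$ ends with $\dot{b}\dot{a}$, its last block is a single $\dot{a}$. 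Hence any block of length at least two must be an \emph{internal} block.

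First I would dispatch the case in which some maximal block has length $k \geq 2$. Call the repeated letter $\dot{c}$ and the other letter $\dot{d}$. By the preliminary observation this block is internal, so over a binary alphabet it is necessarily flanked on both sides by $\dot{d}$; that is, $\bar{w}$ contains the sub-word $\dot{d}\underbrace{\dot{c}\cdots\dot{c}}_{k}\dot{d}$. This sub-word has length $k + 2 \geq 4$ and visibly equals its own reversal, so it is a long palindrome, and we are done.

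The remaining case is that every maximal block has length exactly $1$, i.e.\ $\bar{w}$ is strictly alternating, $\bar{w} = \dot{a}\dot{b}\dot{a}\dot{b}\cdots\dot{a}$. Since $\bar{w}$ both begins and ends with $\dot{a}$, its length is odd, and since $|\bar{w}| \geq 4$ that length is at least $5$. Consequently $\dot{a}\dot{b}\dot{a}\dot{b}\dot{a} \leq \bar{w}$, and this alternating sub-word of length $5$ is a long palindrome (indeed all of $\bar{w}$ is one). This exhausts both cases.

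I expect the only genuinely delicate point to be the preliminary observation that forces every long block to be internal: without it one would have to worry about a long run of equal letters sitting at an end of $\bar{w}$, where it need not be flanked by $\dot{d}$ on both sides and so need not generate a palindrome of the form $\dot{d}\cdots\dot{d}$. The hypotheses $\dot{a}\dot{b}$ at the front and $\dot{b}\dot{a}$ at the back are exactly what rule this out, pinning both end blocks to length one; the rest of the argument is routine verification that the exhibited sub-words are palindromes of length at least $4$.
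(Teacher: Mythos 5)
Your proof is correct. It rests on the same top-level dichotomy as the paper's argument --- either $\bar{w}$ contains a doubled letter or it is strictly alternating --- and your alternating case is essentially identical to the paper's, but the doubled-letter half is handled by a genuinely different mechanism. The paper fixes an occurrence $\dot{y}\dot{y} \leq \bar{x}$ and runs an explicit case analysis on the six-letter window $\dot{l}_1\dot{l}_2\dot{y}\dot{y}\dot{r}_1\dot{r}_2$, branching on whether $\dot{l}_2 = \dot{r}_1$, then on $\dot{l}_1$, $\dot{r}_2$, and so on. You instead pass to a \emph{maximal} run $\dot{c}^k$ with $k \geq 2$: maximality forces any flanking letters to equal the opposite letter $\dot{d}$, and the hypothesis $\bar{w} = \dot{a}\dot{b}\bar{x}\dot{b}\dot{a}$ pins both end-runs of $\bar{w}$ to length one, so every run of length at least two is internal and immediately yields the palindrome $\dot{d}\dot{c}^k\dot{d}$ of length $k+2 \geq 4$. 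This buys a shorter, less error-prone argument with no subcases to enumerate, and it also silently covers a configuration the paper's write-up skips (the paper's first case treats $|\bar{x}| \leq 1$ and $|\bar{x}| > 2$ but says nothing about $|\bar{x}| = 2$, where a square of length $2$ can appear in $\dot{b}\bar{x}\dot{b}$ without appearing in $\bar{x}$ itself; your run decomposition of all of $\bar{w}$ makes the location of the doubled letter irrelevant). The one delicate point you correctly identified --- that the prefix $\dot{a}\dot{b}$ and suffix $\dot{b}\dot{a}$ are exactly what force long runs to be internal --- is the analogue of the paper's use of the fact that $\dot{y}\dot{y}$ lies inside $\bar{x}$ and hence has two letters on each side.
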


\begin{proof}
	We consider two cases: case (1), if $\bar{x}$ contains no square of length $2$, and case (2), if $\bar{x}$ contains at least one square of length $2$. \\
	
	Suppose first that $\bar{x}$ contains no square of length $2$. If $|\bar{x}| \leq 1$, then clearly we have the desired long palindrome. If $|\bar{x}| > 2$, then since by assumption $\bar{x}$ contains no square of length $2$, it must be the case that $\bar{x}$ is an alternating sequence of $\dot{a}$'s and $\dot{b}$'s. It follows that $\bar{x}$ must contain a long palindrome of the form $\dot{a}\dot{b}\dot{b}\dot{a}$ or $\dot{a}\dot{b}\dot{a}\dot{b}\dot{a}$. 
		
	Suppose next that $\dot{y}\dot{y} \leq \bar{x}$ for some $\dot{y} \in A$. If follows that, for some $\dot{l}_1 , \dot{l}_2, \dot{r}_1, \dot{r}_2 \in A$, we have $\dot{l}_1\dot{l}_2\dot{y}\dot{y}\dot{r}_1\dot{r}_2 \leq \bar{w}$. If $\dot{l}_2 = \dot{r}_1$, then we have the desired long palindrome. Suppose $\dot{l}_2 \neq \dot{r}_1$; then either $\dot{l}_2 = \dot{y} \neq \dot{r}_1$ or $\dot{l}_2 \neq \dot{y} = \dot{r}_1$. 
	
	Suppose first that $\dot{l}_2 = \dot{y} \neq \dot{r}_1$, from which follows $\dot{l}_1 \dot{l}_2 \dot{y}\dot{y}\dot{r}_1\dot{r}_2 = \dot{l}_1 \dot{l}_1 \dot{y}\dot{y}\dot{r}_1 \dot{r}_2 \leq \bar{w}$. If $\dot{l}_1 = \dot{r}_1$, then we have the desired long palindrome; otherwise, we have $\dot{y}\dot{y}\dot{y}\dot{y}\dot{r}_1\dot{r}_2 \leq \bar{w}$ since $\dot{l}_2 = \dot{y} \neq \dot{r}_1$ by assumption, which contains the desired long palindrome. 
	
	Now suppose $\dot{l}_2 \neq \dot{y} = \dot{r}_1$, from which follows $\dot{l}_1 \dot{l}_2 \dot{y}\dot{y}\dot{r}_1\dot{r}_2 = \dot{l}_1 \dot{l}_2 \dot{y}\dot{y}\dot{y}\dot{r}_2 \leq \bar{w}$. Either $\dot{l}_1 = \dot{y}$ or $\dot{l}_1 = \dot{l}_2$. In the former case, then either we have $\dot{r}_2 = \dot{y}$, in which case we have $\dot{l}_1 \dot{l}_2 \dot{y}\dot{y}\dot{r}_1\dot{r}_2 = \dot{y}\dot{l}_2 \dot{y}\dot{y}\dot{y}\dot{y}$, or we have $\dot{r}_2 = \dot{l}_2$, in which case we have $\dot{l}_1 \dot{l}_2 \dot{y}\dot{y}\dot{r}_1\dot{r}_2 = \dot{y}\dot{l}_2\dot{y}\dot{y}\dot{y}\dot{l}_2$; in either case, we have the desired long palindrome. In the latter case where $\dot{l}_1 = \dot{l}_2$, then either we have $\dot{r}_2 = \dot{l}_2$ in which case we have $\dot{l}_1 \dot{l}_2 \dot{y}\dot{y}\dot{r}_1\dot{r}_2 = \dot{l}_2 \dot{l}_2 \dot{y}\dot{y}\dot{y}\dot{l}_2$, or we have $\dot{r}_2 = \dot{y}$, in which case we have $\dot{l}_1 \dot{l}_2 \dot{y}\dot{y}\dot{r}_1\dot{r}_2 = \dot{l}_2 \dot{l}_2 \dot{y} \dot{y} \dot{y} \dot{y}$; in either case, we have the desired long palindrome. 
\end{proof}

With Lemma \ref{abxba} in hand, we move to state and prove the first main result of this paper. 

\begin{prop}\label{SmallTrees}
	Every rooted tree of radius less than or equal to $7$ is long-repetition-free 2-colorable.
\end{prop}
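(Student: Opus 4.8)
The plan is to root the tree at a vertex $r$ of minimum eccentricity. Since the radius is at most $7$, this center vertex has every other vertex within distance $7$, so each vertex lies in some generation $G_k(T)$ with $0\le k\le 7$, every path of $T$ uses at most $15$ vertices, and hence every word we must control has length at most $15$. Because long-square-freeness is inherited by subgraphs (every path of a subtree is a path of the whole tree), it is enough to produce a single coloring $\chi$, and I may reason about the ``worst case'' in which $r$ and the interior vertices branch as much as the length bound permits. Every path $P$ in $T$ meets a unique highest vertex $\ell$, its nearest common ancestor; writing the descent from $\ell$ to one endpoint as $\bar\alpha$ and to the other as $\bar\beta$, the word of $P$ is $\overleftarrow{\alpha}\cdot\bar\beta$ with the color $\chi(\ell)$ shared once at the seam. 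Thus every path word is either a pure descent (a factor of a root-to-leaf word) or such a ``bent'' word.

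I would first arrange that every root-to-leaf descent is long-square-free. The natural device is to let $\chi(v)$ depend on the generation of $v$ through a fixed binary seed $\dot c_0\dot c_1\cdots\dot c_7$, so that descending words are exactly the factors $\dot c_i\cdots\dot c_j$ of the seed; it then suffices to choose the seed with no long-square factor, a finite condition on a length-$8$ word. The delicate paths are the bent ones. Under a purely generation-based rule the two arms $\overleftarrow{\alpha}$ and $\bar\beta$ read mirror-image color patterns around $\ell$, so $\overleftarrow{\alpha}\cdot\bar\beta$ is palindromic near the seam; a short computation shows such a reflected word carries a long square precisely when the seed contains a long palindrome as a factor. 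Since long palindromes are essentially unavoidable in length-$8$ binary seeds, a uniform generation coloring cannot succeed, and the coloring must treat distinct child-subtrees of a branching vertex differently.

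The crux is therefore the bent paths, and this is exactly where Lemma \ref{abxba} does the work. I would break the mirror symmetry at each branching vertex by assigning its child-subtrees colorings that are not reflections of one another (for instance, complementing the seed on alternate branches), so that the two arms of a bent path cannot reinforce into a long reflected repetition. To verify no long square survives, I argue by contradiction: a long square straddling the seam at $\ell$, after trimming, begins with a pattern $\dot a\dot b$ along the ascending arm and ends with $\dot b\dot a$ along the descending arm, so it contains a factor of the form $\dot a\dot b\bar x\dot b\dot a$. By Lemma \ref{abxba} this factor contains a long palindrome; tracing it back through the seam identifies it with a long palindrome living in a single arm, hence with a forbidden long-palindromic factor of the (possibly complemented) seed. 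Choosing the seed and the branch rule to exclude those finitely many patterns then rules out the straddling square, while squares confined to one arm are already excluded by the descent analysis.

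The step I expect to be the main obstacle is precisely this seam analysis: with only two colors one must simultaneously defeat the reflection at every branching vertex and for every pair of arm-lengths up to $7$, and Lemma \ref{abxba} is the tool that converts each potential straddling long square into a long palindrome that the coloring has been designed to forbid. The appearance of the bound $7$ is not incidental; it caps every path word at length $15$, short enough that a finite binary pattern (together with the branch-complementation rule) can avoid all the relevant long palindromes and long squares at once. I would finish by checking the remaining finitely many seam configurations directly, which is routine once the lemma has reduced each to the absence of a specific long palindrome.
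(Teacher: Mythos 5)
Your setup is right---root the tree at a center vertex, color by generation from a length-$8$ binary seed, and use Lemma~\ref{abxba} to convert a long square straddling the bend of a path into a long palindrome---but the argument collapses at the pivotal claim that ``long palindromes are essentially unavoidable in length-$8$ binary seeds,'' from which you conclude that ``a uniform generation coloring cannot succeed.'' That claim is false: the word $00010111$ has no palindromic factor of length $4$ or $5$ (hence no long palindrome) and no long square, and a purely generation-based coloring by exactly this seed is what the paper uses. Lemma~\ref{LPalindrome} forces a long palindrome only from length $9$ onward; length $8$ is precisely the threshold, and that---not your bound of $15$ on path lengths---is the real reason the hypothesis is radius at most $7$ (eight generations, $G_0$ through $G_7$).

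Having rejected the coloring that works, you replace it with a branch-dependent scheme (``complementing the seed on alternate branches'') that is never pinned down and that introduces new, unchecked failure modes. Once the seed is complemented below a branching vertex, a descending word through that vertex is no longer a factor of a single fixed word, so your own descent analysis no longer applies; concretely, with seed $\dot{a}_0\cdots\dot{a}_7 = 00010111$ and complementation on one arm below a bend at generation $3$, the bent word $\dot{a}_7\dot{a}_6\dot{a}_5\dot{a}_4\dot{a}_3\dot{a}_4^c\dot{a}_5^c\dot{a}_6^c\dot{a}_7^c = 111011000$ contains the long square $110110$. The step ``tracing it back through the seam identifies it with a long palindrome living in a single arm'' is also only asserted, and it is exactly where the work lies: in the paper, because the coloring is uniform by generation, the second copy of the putative square sits entirely in the descending arm, so the repeated pattern around the bend forces a factor of the form $\dot{a}_0\dot{a}_1\bar{x}\dot{a}_1\dot{a}_0$ of the seed itself, and Lemma~\ref{abxba} then contradicts the seed's long-palindrome-freeness. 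To repair your proof, delete the detour: keep the uniform generation coloring, exhibit a length-$8$ seed that is simultaneously long-square-free and long-palindrome-free, and run the seam analysis at an arbitrary bend vertex.
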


\begin{proof}
	Let $T = (V,E)$ be a rooted tree of radius less than or equal to $7$. Let $v \in V$ be a vertex in the center of $T$, and designate $v$ as the root. Let $j$ be the number of generations of $v$, and note that $j \leq 7$. Let $\bar{a} = \dot{a}_1 \dots \dot{a}_7 = 00010111$, and let $\phi: V \rightarrow \{0,1\}$ be the coloring defined by $\phi(u) = \dot{a}_i$, for every vertex $u$ in the $i\textsuperscript{th}$ generation from $v$ and for every $1 \leq i \leq j$. We prove that $\phi$ is a long-repetition-free 2-coloring.
	
	By way of contradiction, suppose there exists a path $P = p_1 \dots p_{2k} \in T$ for $k \geq 3$ such that, for words $\bar{u}' = \phi(p_1) \dots \phi(p_k)$ and $\bar{u}'' = \phi(p_{k+1}) \dots \phi(p_{2k})$, we have $\bar{u}' = \bar{u}''$. Since $P$ contains a long repetition and since $\bar{a} = 00010111$ is long-square-free, $P$ cannot be a path of vertices each from different generations. So, there exists some vertex $p_j \in P$ such that $p_{j-1}$ and $p_{j+1}$ are in the same generation; however, it must be the case that $p_j = v$, for in no other way can adjacent vertices of $p_j$ be in the same generation. It follows that $p_{j-1}$ and $p_{j+1}$ are in the first generation from $v$, and the word $\bar{u}'\bar{u}''$ is of the form $\bar{u}'\bar{u}''= \dot{a}_{j-1} \dots \dot{a}_1 \dot{a}_0 \dot{a}_1 \dots \dot{a}_{j-1} \dot{a}_{j} \dots \dot{a}_{2k-j}$. Assume that $j \leq k$ (for the argument that follows is similar for $j > k$), and we consider the cases where $j<k$ and $j = k$.
	
	Case 1: suppose $j < k$. Then $\dot{a}_1 \dot{a}_0 \dot{a}_1 \leq \bar{u}'$. Since $\bar{u}' = \bar{u}''$, we have $\dot{a}_1 \dot{a}_0 \dot{a}_1 \leq \bar{u}''$ as well. Thus, since $j < k$, we have a word of the form $\dot{a}_0 \dot{a}_1 \bar{x} \dot{a}_1 \dot{a}_0 \leq \bar{a}$, where $\bar{x} \in \{0,1\}^*$.  By Lemma \ref{abxba}, we find that $\bar{a}$ contains a palindrome of length 4 or 5. However, $\bar{a} = 00010111$ has no palindrome of length greater than 3, which is a contradiction. 
	
	Case 2: suppose $j = k$. Then either $\bar{a} = \bar{u}'$ and $\bar{a} = \overleftarrow{u}''$, or $\bar{a} = \overleftarrow{u}'$ and $\bar{a} = \bar{u}''$. Assuming, without loss of generality, that the latter is true, it follows that $\bar{u}''$ (and therefore $\bar{a}$) is of the form $\bar{u}' = \dot{a}_0 \dot{a}_1 \dots \dot{a}_1 \dot{a}_0$. Thus it follows by Lemma \ref{abxba} that $\bar{a} = 00010111$ contains a palindrome of length 4 or of length 5, which is a contradiction. Hence, $\phi$ is a long-repetition-free 2-coloring.
\end{proof}

\section{A Class of Trees that is Not Long-Repetition-Free 2-Colorable}
One may ask whether or not every tree is long-repetition-free 2-colorable. In this section, we show that a certain class of trees require a minimum of 3 colors for a long-repetition-free coloring, making 3 colors the lowest bound for long-repetition-free colorings of trees. In order to gain this result, we first prove Lemma \ref{LPalindrome} and Lemma \ref{BinaryTree}. 

\begin{lemma}\label{LPalindrome} Every binary word of length at least $9$ contains a long palindrome. \end{lemma}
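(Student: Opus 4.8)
The plan is to prove the contrapositive: I will show that every binary word $\bar{w}$ containing no long palindrome has length at most $8$, so that any binary word of length at least $9$ must contain one. The bound is tight, as witnessed by the word $\bar{a} = 00010111$ from the proof of Proposition~\ref{SmallTrees}, which one checks has no palindromic factor of length $4$ or more. Since a long palindrome is by definition a palindrome of length at least $4$, the hypothesis guarantees in particular that $\bar{w}$ contains no palindromic factor of length $4$ (necessarily of the form $xyyx$) and none of length $5$ (necessarily of the form $xyzyx$); these two restrictions alone will force $|\bar{w}| \leq 8$, so I need not separately argue that avoiding lengths $4$ and $5$ rules out all longer palindromes.

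First I would decompose $\bar{w}$ into its maximal runs $R_1, R_2, \dots, R_m$ of equal consecutive letters, with lengths $r_1, \dots, r_m$, and write $y'$ for the letter of the binary alphabet other than $y$. Over a binary alphabet the letters immediately flanking any interior run are forced to equal the opposite letter, and this is exactly what converts the two palindrome restrictions into constraints on the $r_i$. Three observations drive the argument: (i) no run has length $\geq 4$, since a run $yyyy$ is itself a length-$4$ palindrome; (ii) every interior run has length exactly $1$, since an interior run $yy$ produces the factor $y'yyy'$ (a length-$4$ palindrome) and an interior run $yyy$ produces $y'yyyy'$ (a length-$5$ palindrome), both forbidden; and (iii) there are at most $4$ runs in total.

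For observation (iii), I would form the factor $F$ by concatenating the last letter of $R_1$, all of the (singleton) interior runs $R_2, \dots, R_{m-1}$, and the first letter of $R_m$. Consecutive runs carry opposite letters, so $F$ is an alternating binary word, and by construction it has exactly $m$ letters; if $m \geq 5$ then $F$ contains $01010$ or $10101$, each a length-$5$ palindrome, whence $m \leq 4$. Combining the three observations and treating the degenerate cases $m \leq 2$ directly, the length satisfies $|\bar{w}| = r_1 + r_m + (m-2) \leq 3 + 3 + 2 = 8$, as required. The only delicate points are observation (iii)---pinning down that the maximal alternating stretch through the middle of the word has length exactly the number of runs---and the bookkeeping for the two end runs, where length $2$ or $3$ is permitted because one flank is absent; everything else is a direct translation of the two palindrome restrictions into run lengths. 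A more computational alternative would be an exhaustive backtracking search over binary words, pruning as soon as a length-$4$ or length-$5$ palindromic suffix appears, but the run argument is cleaner and recovers the extremal word $00010111$ directly.
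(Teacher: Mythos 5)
Your proof is correct, and it takes a genuinely different route from the paper's. The paper argues by direct case analysis on letter positions: it writes down necessary conditions for a length-$9$ binary word to avoid long palindromes and then exhausts four cases (determined by the relative values of a few fixed positions), showing each leads to a forced long palindrome. Your argument is structural: you first reduce to avoiding palindromic factors of lengths exactly $4$ and $5$ (valid, since deleting the two end letters of a palindrome of length $n\geq 6$ leaves a palindrome of length $n-2$, so any long palindrome contains one of length $4$ or $5$), and then translate those two restrictions into constraints on the run-length decomposition: no run of length $\geq 4$ (else $yyyy$), every interior run a singleton (else $y'yyy'$ or $y'yyyy'$, using maximality of runs to pin down the flanking letters), and at most $4$ runs (else the alternating factor through the run boundaries contains $xyxyx$). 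The bookkeeping $|\bar{w}| = r_1 + r_m + (m-2) \leq 3+3+2 = 8$ then closes the argument. What your approach buys is a tight, explanatory bound: it shows \emph{why} $8$ is the threshold, characterizes the near-extremal words, and recovers the word $00010111$ used in Proposition \ref{SmallTrees} as the extremal example, whereas the paper's case check only certifies that length $9$ is impossible. The paper's approach is more mechanical but requires no auxiliary notions beyond the word itself. The only places where your sketch would need to be fleshed out in a full write-up are the degenerate cases $m \leq 2$ (which you flag and which are immediate) and the observation that the factor $F$ is genuinely contiguous in $\bar{w}$ with exactly $m$ letters; both are routine.
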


\begin{proof} It is clear that we need only consider binary words of length $9$, for any binary words of length greater than $9$ contains a binary word of length $9$. Let $\dot{a}_{1}\dot{a}_{2} \dots \dot{a}_{9}$ be a word over alphabet $A=\{0,1\}$. Define $\dot{a}^{c}_{i}$ to be the binary complement of $\dot{a}_i$; more precisely, $\dot{a}^c_i \equiv \dot{a}_i+1 \mod{1}$ for every $i \leq 9$. The reader can verify that for $\dot{a}_1 \dots \dot{a}_9$ to be long-palindrome-free, the following conditions must hold:
	\begin{enumerate}[(i)]
		\item If $\dot{a}_i = \dot{a}_{i+3}$, then $\dot{a}_{i+1} = \dot{a}^{c}_{i+2}$, for $1 \leq i \leq 6$; 
		\item If $\dot{a}_i = \dot{a}_{i+4}$, then $\dot{a}_{i+1} = \dot{a}^{c}_{i+3}$, for $1 \leq i \leq 5$; 
		\item If $\dot{a}_i = \dot{a}_{i+1}$, then $\dot{a}_{i+1} = \dot{a}^{c}_{i+2}$, for $2 \leq i \leq 6$. 
	\end{enumerate}
	
	\noindent
	Suppose without loss of generality that $\dot{a}_0 = 0$. We consider four cases. First, suppose $\dot{a}_0 = \dot{a}_4 = \dot{a}_5 = 0$. By (i), it follows that $\dot{a}_2 = \dot{a}_3^{c}$. Then $\dot{a}_2 = 1$ and $\dot{a}_3 = 0$, for otherwise we have long palindrome $00100$. However, we now have no choice for $\dot{a}_6$, for $\dot{a}_6 = 0$ yields long palindrome $0000$, and $\dot{a}_6 = 1$ yields long palindrome $10001$. 
	
	\indent Suppose next that $\dot{a}_1 = \dot{a}_4 = \dot{a}_5^c = 0$. By (i), it follows that $\dot{a}_2 = \dot{a}_3^c$. Then $\dot{a}_2 = 0$ and $\dot{a}_3 = 1$, for otherwise we have long palindrome $1001$. Then $\dot{a}_6 = 1$, for otherwise we have long palindrome $01010$, and so $\dot{a}_7 = 0$, for otherwise we have long palindrome $0110$. However, we now have no choice for $\dot{a}_8$, for $\dot{a}_8 = 0$ yields long palindrome $01110$, and $\dot{a}_8 = 1$ yields long palindrome $1111$. 
	
	\indent Suppose that $\dot{a}_1 = \dot{a}_4^c = \dot{a}_5^c = 0$. By (iii), $\dot{a}_6 = 0$, and it follows that $\dot{a}_3 = 1$ for otherwise we have long palindrome $0110$. However, we now have no choice for $\dot{a}_2$, for $\dot{a}_2 = 0$ yields long palindrome $01110$, and $\dot{a}_2 = 1$ yields long palindrome $1111$. 
	
	\indent Finally, suppose that $\dot{a}_1 = \dot{a}_4^c = \dot{a}_5 = 0$. By (ii), $\dot{a}_2 = 1$, and it follows that $\dot{a}_3 = 0$ for otherwise we have long palindrome $0110$. Then $\dot{a}_6=1$, for otherwise we have long palindrome $00100$; $\dot{a}_7 = 1$, for otherwise we have long palindrome $01010$; and $\dot{a}_8 = 1$, for otherwise we have $0110$. However, we now have no choice for $\dot{a}_9$, for $\dot{a}_9 = 0$ yields long palindrome $01110$, and $\dot{a}_9 =1 $ yields long palindrome $1111$.  \end{proof}
	
\begin{lemma}\label{BinaryTree} Let $\mathbb{T}_n$ denote the Tyler tree of height $n$ with root $r$, and apply an arbitrary 2-coloring $\phi$ to $\mathbb{T}_n$. Then $\mathbb{T}_n$ contains a binary sub-tree of height $n$, denoted $\mathbb{B}_n$, with root $r$ such that, for every $u,v \in G_i(\mathbb{B}_n)$, $\phi(u) = \phi(v)$, for all $0 \leq i \leq n-1$.  \end{lemma}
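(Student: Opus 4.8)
The plan is to argue by induction on $n$, exploiting the self-similar structure of the Tyler tree. The base case $n=0$ is immediate: $\mathbb{T}_0$ is a single vertex, which serves as its own binary subtree $\mathbb{B}_0$, and the monochromaticity condition on generations $0 \le i \le n-1$ is vacuous. For the inductive step ($n \ge 1$), I would first invoke the structural fact recorded after the definition of the Tyler tree: each of the $2^n+1$ children of the root $r$ is itself the root of a copy of $\mathbb{T}_{n-1}$, since a vertex lying in generation $j$ of such a copy has $2^{(n-1)-j}+1$ children, exactly matching the defining branching rule for $\mathbb{T}_{n-1}$. Restricting the given coloring $\phi$ to each of these $2^n+1$ copies and applying the induction hypothesis, every child $c$ of $r$ yields a binary subtree $\mathbb{B}_{n-1}^{(c)}$ of height $n-1$ whose generations $0$ through $n-2$ are each monochromatic under $\phi$.

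The key step is then a pigeonhole argument on what I will call color profiles. To each child $c$ I associate the profile $p(c) \in \{0,1\}^{n-1}$ that records, in order, the common color of generation $0$, of generation $1$, \dots, of generation $n-2$ of $\mathbb{B}_{n-1}^{(c)}$; note in particular that the zeroth entry of $p(c)$ is just $\phi(c)$. There are only $2^{n-1}$ possible profiles, whereas there are $2^n+1$ children. Since $2^n+1 > 2\cdot 2^{n-1} = 2^n$, the pigeonhole principle forces two distinct children $c_1,c_2$ with $p(c_1)=p(c_2)$. It is precisely the ``$+1$'' in the branching number $2^{n-j}+1$ that guarantees this repetition, which is the whole reason the Tyler tree is defined with these particular degrees.

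Finally I would assemble $\mathbb{B}_n$ by taking $r$ as the root, $c_1$ and $c_2$ as its two children, and grafting on the subtrees $\mathbb{B}_{n-1}^{(c_1)}$ and $\mathbb{B}_{n-1}^{(c_2)}$; the result is a binary subtree of $\mathbb{T}_n$ of height $n$ rooted at $r$. Generation $0$ is monochromatic trivially. For each $i$ with $1 \le i \le n-1$, generation $i$ of $\mathbb{B}_n$ is the union of generation $i-1$ of $\mathbb{B}_{n-1}^{(c_1)}$ and generation $i-1$ of $\mathbb{B}_{n-1}^{(c_2)}$ (for $i=1$ this union is exactly $\{c_1,c_2\}$); each of these two sets is monochromatic of the color recorded in the $(i-1)$\textsuperscript{st} entry of its profile, and these entries agree because $p(c_1)=p(c_2)$. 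Hence generations $0$ through $n-1$ are all monochromatic, closing the induction.

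I expect the main obstacle to be identifying the correct inductive invariant, namely carrying along the \emph{entire} per-generation color profile rather than merely a single color. Tracking the whole profile is what allows the two selected subtrees to be fused without breaking monochromaticity at any level, and once the profile is made the object of the pigeonhole count, the required inequality $2^n+1 > 2\cdot 2^{n-1}$ is exactly the one supplied by the Tyler tree's branching numbers. The remaining verifications (that the grafted object is genuinely a binary tree of height $n$ and that the generation-by-generation bookkeeping lines up) are routine.
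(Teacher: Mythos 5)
Your proposal is correct and follows essentially the same route as the paper: induct on $n$, view $\mathbb{T}_n$ as $2^n+1$ copies of $\mathbb{T}_{n-1}$ hanging off the root, apply the inductive hypothesis to each copy, and pigeonhole on the per-generation color profiles to find two identically profiled binary subtrees to fuse under $r$. The only cosmetic differences are your base case $n=0$ in place of the paper's $n=1$ and a slightly tighter pigeonhole count ($2^{n-1}$ profiles rather than the paper's $2^n$ including the shared root's color); both counts yield the required collision.
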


\begin{proof} We prove the result by induction on height $n$. \\
	
	Tyler tree $\mathbb{T}_1$ contains exactly four vertices: root $r$ and 3 children vertices adjacent to $r$. By the pigeonhole principle, at least 2 of these 3 children vertices must be colored with the same color. Thus, $\mathbb{T}_1$ contains a binary tree of height $1$ whose root is $r$ and where all the vertices on the same level have the same color. See Figure \ref{fig:basecase} for an example of $\mathbb{T}_1$ and $\mathbb{T}_2$ each with an embedded binary tree colored by generations.  \\

	\begin{center}{\includegraphics[scale=1.00]{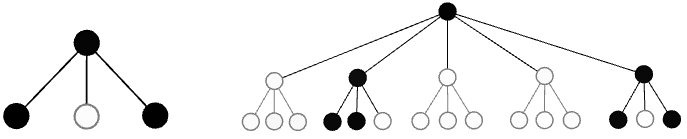}}
	\begin{fig}\label{fig:basecase} The Tyler tree of height $1$ (left) and the Tyler tree of height $2$ (right) each contain a binary sub-tree in which every vertex in each generation from the root has the same color. \end{fig} \end{center}

	Suppose every Tyler tree of height $k \leq n-1$ contains a binary sub-tree such that, for every $u,v \in G_i(\mathbb{B}_k)$, we have $\phi(u) = \phi(v)$, for every $0 \leq i \leq k-1$. Let $\mathbb{T}_{k+1}$ denote the Tyler tree of height $k+1$ with root $r$, and apply arbitrary 2-coloring $\phi$ to $\mathbb{T}_{k+1}$. By definition, $r$ has $2^{k+1} + 1$ children, and each of these children are the root of a Tyler tree of height $k$. Therefore, $\mathbb{T}_{k+1}$ contains $2^{k+1}+1$ Tyler trees of height $k$ as sub-trees, and, by the inductive hypothesis, it follows that each of these Tyler trees of height $k$ contain a binary sub-tree of height $k$ such that, for every $u,v \in G_i(\mathbb{B}_k)$, we have $\phi(u) = \phi(v)$, for every $0 \leq i \leq k-1$. 
	
	Each of these $2^{k+1}+1$ binary sub-trees has $k$ generations, and, given the coloring constraints provided in the inductive hypothesis, there are exactly $2^k$ possible 2-colorings of these binary sub-trees. Attach to these binary sub-trees the root $r$. As there are $2$ possible 2-colorings of $r$, there are a total of $2^{k+1}$ possible 2-colorings of this binary sub-tree of height $k+1$. Since there are $2^{k+1} + 1$ binary sub-trees of height $k$ and only $2^{k+1}$ possible 2-colorings, it follows that at least two binary sub-trees of height $k$ must be colored identically. The sub-tree induced by these two identically-colored binary trees of height $k$, when attached to $r$, form a binary tree of height $k+1$ in which, for every $u,v \in G_i(\mathbb{B}_{k+1})$, we have $\phi(u) = \phi(v)$, for all $0 \leq i \leq k$, thereby completing the proof. \end{proof}

\begin{prop}\label{Thm:TylerTree} If $n \geq 8$, then $\mathbb{T}_{n}$ is not long-repetition-free 2-colorable. \end{prop}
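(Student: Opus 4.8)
The plan is to prove the contrapositive of nothing fancier than the statement itself: I will show that \emph{every} 2-coloring $\phi$ of $\mathbb{T}_n$ (for $n\ge 8$) admits a path whose color word is a long square, so no such $\phi$ can be long-repetition-free. First I would apply Lemma \ref{BinaryTree} to extract from $\mathbb{T}_n$ a binary subtree $\mathbb{B}_n$ with root $r$ in which every generation is monochromatic (the induction there in fact makes all $n+1$ generations $G_0,\dots,G_n$ monochromatic, down to the leaves). Writing $\dot{a}_i$ for the common color of $G_i(\mathbb{B}_n)$, the word $\bar{a}=\dot{a}_0\dot{a}_1\cdots\dot{a}_n$ is a binary word of length $n+1\ge 9$, so Lemma \ref{LPalindrome} furnishes a long palindrome inside $\bar{a}$. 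I fix such a palindrome occupying generations $c,c+1,\dots,c+\ell-1$ with $\ell\ge 4$, so that $\dot{a}_{c+j}=\dot{a}_{c+\ell-1-j}$ for all $0\le j\le\ell-1$.

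The heart of the argument is to turn this long \emph{palindrome} in the generation word into a long \emph{square} along a carefully chosen up-then-down (``valley'') path in $\mathbb{B}_n$. Let $w$ be the vertex of $\mathbb{B}_n$ at generation $c$ on the palindrome. Since $\mathbb{B}_n$ is binary and $c<n$, the vertex $w$ has two children; in one child's subtree I pick a descendant $u\in G_{c+\ell-1}(\mathbb{B}_n)$ and in the other child's subtree a descendant $v\in G_{c+\ell-2}(\mathbb{B}_n)$. The simple path from $u$ to $v$ has $w$ as its lowest common ancestor, so reading $\phi$ along it and using generation-monochromaticity gives the word
\[
\dot{a}_{c+\ell-1}\,\dot{a}_{c+\ell-2}\cdots\dot{a}_{c+1}\,\dot{a}_{c}\;\dot{a}_{c+1}\,\dot{a}_{c+2}\cdots\dot{a}_{c+\ell-2},
\]
of length $2(\ell-1)$. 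Splitting this word into two blocks of length $\ell-1$, the $k$th letters of the two blocks are $\dot{a}_{c+\ell-k}$ and $\dot{a}_{c+k-1}$ respectively; these are equal for every $1\le k\le\ell-1$ by the palindrome relation (set $j=k-1$). Hence the word is a square $\bar{s}\bar{s}$ with $|\bar{s}|=\ell-1\ge 3$, i.e.\ a long square.

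To finish I would check the bookkeeping: the deepest vertex of the valley lies in $G_{c+\ell-1}(\mathbb{B}_n)$ with $c+\ell-1\le n$, so both branches genuinely exist inside $\mathbb{B}_n$ (hence inside $\mathbb{T}_n$), and the down-branch is nondegenerate because $\ell-2\ge 2$. This exhibits a path of $\mathbb{T}_n$ whose color word contains a long square, contradicting the long-repetition-freeness of $\phi$; as $\phi$ was arbitrary, $\mathbb{T}_n$ is not long-repetition-free 2-colorable. I expect the main obstacle to be precisely the middle step: one must notice that a \emph{symmetric} valley merely reproduces a palindrome, and that the correct device is an \emph{asymmetric} valley descending $\ell-1$ edges on one side and $\ell-2$ on the other, so that the resulting word has even length and, crucially, its square-equality constraints coincide verbatim with the palindrome's defining relations. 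The reductions via Lemmas \ref{BinaryTree} and \ref{LPalindrome}, the realizability of the valley in the binary subtree, and the parity check ensuring even total length are the routine parts.
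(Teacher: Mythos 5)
Your proof is correct, and it shares the paper's skeleton---extract a generation-monochromatic binary subtree via Lemma~\ref{BinaryTree}, apply Lemma~\ref{LPalindrome} to the generation word, then walk a ``valley'' path through the long palindrome---but the final step is genuinely different. The paper uses a \emph{symmetric} valley $u_{j+m}\cdots u_{j}u'_{j+1}\cdots u'_{j+m}$ of odd length $2m+1$, first shrinking the palindrome to length $4$ or $5$ and then checking by explicit enumeration (six color patterns) that each resulting word happens to contain a long square as a proper sub-word. Your \emph{asymmetric} valley, descending $\ell-1$ edges on one side and $\ell-2$ on the other, produces a word of even length $2(\ell-1)$ that is \emph{exactly} a square $\bar{s}\bar{s}$ with $|\bar{s}|=\ell-1\ge 3$, the equality of the two halves being letter-for-letter the palindrome relation; this works uniformly for every $\ell\ge 4$ and eliminates all case analysis, at no cost in generality. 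One further remark: your parenthetical strengthening of Lemma~\ref{BinaryTree} to make all $n+1$ generations $G_0,\dots,G_n$ monochromatic is not a luxury but a necessity, and you are right that the induction delivers it (the pigeonhole compares $2^{k+1}+1$ subtrees against at most $2^{k+1}$ generation words). As literally stated the lemma covers only $G_0,\dots,G_{n-1}$, which for $n=8$ yields a word of length $8$---too short for Lemma~\ref{LPalindrome}, and indeed the word $00010111$ from Proposition~\ref{SmallTrees} shows that length $8$ genuinely does not suffice. The paper's own proof applies Lemma~\ref{LPalindrome} to the length-$8$ word $\dot{b}_0\cdots\dot{b}_7$ and so carries this off-by-one gap; your version repairs it.
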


\begin{proof} Let $\mathbb{T}_8$ denote the Tyler tree of height $8$ with root $r$. We need only consider $\mathbb{T}_8$ to prove the result. Apply an abritrary 2-coloring $\phi$ to $\mathbb{T}_8$. By Lemma \ref{BinaryTree}, $\mathbb{T}_8$ contains a binary sub-tree $\mathbb{B}_8$ in which, for each $u,v \in G_i (\mathbb{B}_8)$, we have $\phi(u) = \phi(v)$, for all $0 \leq i \leq 7$.\\
	
	Pick vertices $u_0, \dots, u_7$ such that $u_i \in G_i (\mathbb{B}_8)$ for each $0 \leq i \leq 7$, and such that $u_0 , \dots , u_7$ is a simple path. Let $\dot{b}_i$ denote the letter associated with $\phi(u_i)$, and let $\bar{b} = \dot{b}_0 \dots \dot{b}_7$. By Lemma \ref{LPalindrome}, $\bar{b}$ contains a palindrome $\bar{p}$ of length 4 or of length 5. Suppose $\bar{p} = \dot{b}_j \dots \dot{b}_{j+m}$, where $0 \leq j \leq 4$ and where $m = 3$ or $m = 4$. Consider the path $P = u_{j+m} \dots u_{j+1} u_j u_{j+1}' \dots u_{j+m}'$, where $u_i' \in G_i (\mathbb{B}_8)$ for each $0 \leq i \leq j + m$.  
	
	Let $\bar{c}$ be the word associated with $\phi(P) = \phi( u_{j+m} \dots u_{j+1} u_j u_{j+1}' \dots u_{j+m}')$. If $|\bar{p}| = 4$, then either $\bar{c} = \dot{c}_1\dot{c}_1\dot{c}_1\dot{c}_1\dot{c}_1\dot{c}_1\dot{c}_1$, or $\bar{c} = \dot{c}_1\dot{c}_2\dot{c}_2\dot{c}_1\dot{c}_2\dot{c}_2\dot{c}_1$, where $c_i \in \{0,1\}$. In either case, $\bar{c}$ contains a long square. If $|\bar{p}| = 5$, then there are four possibilities: (1) $\bar{c} = \dot{c}_1\dot{c}_1\dot{c}_1\dot{c}_1\dot{c}_1\dot{c}_1\dot{c}_1\dot{c}_1\dot{c}_1$, (2) $\bar{c} = \dot{c}_1\dot{c}_2\dot{c}_1\dot{c}_2\dot{c}_1\dot{c}_2\dot{c}_1\dot{c}_2\dot{c}_1$, (3) $\bar{c} = \dot{c}_1\dot{c}_1\dot{c}_2\dot{c}_1\dot{c}_1\dot{c}_1\dot{c}_2\dot{c}_1\dot{c}_1$ or (4) $\bar{c} = \dot{c}_1\dot{c}_2\dot{c}_2\dot{c}_2\dot{c}_1\dot{c}_2\dot{c}_2\dot{c}_2\dot{c}_1$. In any of these four cases, $\bar{c}$ contains a long square. Since $\bar{c}$ contains a long square regardless of the length of $\bar{p}$, it follows that $P$ contains a long repetition, and thus $\mathbb{T}_8$ is not long-repetition-free 2-colorable. 
\end{proof}

\raggedright
\section*{Acknowledgements}
This research was conducted in part under the direction of David Milan at the 2013 Research Experience for Undergraduates program at the University of Texas at Tyler and was supported in part by the National Science Foundation (Grant DMS-1062740). This research was also conducted in part under the direction of Annika Miller at Susquehanna University.

\bibliography{ReferencesPaper1justTrees2ndEd}{}
\bibliographystyle{plain}

\end{document}